\newcommand{\normmm}[1]{{\left\vert\kern-0.25ex\left\vert\kern-0.25ex\left\vert #1
    \right\vert\kern-0.25ex\right\vert\kern-0.25ex\right\vert}}
\newtheorem{theorem}{Theorem}[section]
\newtheorem{corollary}[theorem]{Corollary}
\newtheorem{lemma}[theorem]{Lemma}
\theoremstyle{definition} \theoremstyle{remark}
\numberwithin{equation}{section}
\begin{document}
 \title{Norm Inequalities Related to Heinz and Logarithmic Means}
 \author{ Guanghua Shi\\
 School of Mathematical Sciences, Yangzhou University, Yangzhou, Jiangsu, China
 \\sghkanting@163.com}

\maketitle

\begin{abstract}
  In this paper, we got some refinements of the norm inequalities related to the Heinz mean and logarithmic mean.
\end{abstract}

 {AMS classification:} 47A63; 94A17

 {\bf Keywords:} Heinz mean, Logarithmic mean, Positive function, Unitarily invariant norm

\section{Introduction}

There are several means that interpolate between the geometric and arithmetic means. For instance, the Heinz mean $H_t(a,b),$  defined  by
\[
H_t(a,b)=\frac{a^{1-t}b^t+a^tb^{1-t}}{2} \quad \mbox{for} \quad 0\le t\le 1.
\]
In 1993, Bhatia-Davis \cite{BD93} obtained that if $A,B$ and $X$ are $n\times n$ matrices with $A,B$ positive semidefinite, then for every unitarily invariant norm $\normmm{\cdot},$
\begin{eqnarray}
\normmm{A^{\frac{1}{2}}XB^{\frac{1}{2}}}\le \frac{1}{2}\normmm{A^{1-t}XB^t+A^tXB^{1-t}}\le \frac{1}{2}\normmm{AX+XB}.
\end{eqnarray}

The logarithmic mean $L(a,b),$ defined by
\[
L(a,b)=\frac{a-b}{\log a- \log b}=\int_0^1a^tb^{1-t}dt,
\]
also interpolates the geometric and arithmetic means.
In 1999, Hiai-Kosaki \cite{HK99} proved the following inequality
\begin{eqnarray}
\normmm{A^{\frac{1}{2}}XB^{\frac{1}{2}}}\le \normmm{\int_0^1A^vXB^{1-v}dv}\le \frac{1}{2}\normmm{AX+XB}.
\end{eqnarray}
Moreover, in 2006, Drissi \cite{Dri06} proved that the following Heinz-logarithmic inequality
\begin{eqnarray}
\normmm{A^{1-t}XB^t+A^tXB^{1-t}}\le 2\normmm{\int_0^1A^vXB^{1-v}dv}
\end{eqnarray}
holds for $\frac{1}{4}\le t\le \frac{3}{4}.$

A complex-valued function $\varphi$ on $\mathbb{R}$ is said to be positive definite if the matrix $[\varphi(x_i-x_j)]$ is positive semidefinite for all choices of real numbers $x_1, x_2,\ldots, x_n,$ and $n=1,2,\ldots.$
Set $M(a,b)$ and $N(a,b)$ are two symmetric homogeneous means on $(0,\infty)\times (0,\infty).$ $M$ is said to strongly dominate $N,$ denoted by $M<< N,$ if and only if the matrix
\[
\left[\frac{M(\lambda_i,\lambda_j)}{N(\lambda_i,\lambda_j)}\right]_{i,j=1,\ldots,n}
\]
is positive semidefinite for any size $n$ and $\lambda_1,\ldots,\lambda_n>0.$
Drissi \cite{Dri06} proved that for $a,b\ge 0,$ $H_t(a,b)<<L(a,b)$ if and only if $\frac{1}{4}\le t\le \frac{3}{4}.$ In general, the inequality $M<<N$ could be stronger than the L\"{o}wner's order inequality $M\le N.$

For more operator or norm inequalities related to the Heinz mean and logarithmic mean we refer the readers to \cite{Ko98, Zhan99, KS14} and the references therein.

\section{Main results}
\begin{lemma} For $\sinh x$ and $\cosh x$, we have
\begin{enumerate}
\item[(i)]  If $|\beta|>|\alpha|>0,$ then the function $\frac{\cosh \alpha x}{\cosh \beta x}$
is positive definite.
\item[(ii)] If $|\beta|>|\alpha|>0$ with $\alpha, \beta$ the same sign, then $\frac{\sinh \alpha x}{\sinh \beta x}$
is positive definite.
\item[(iii)] If $\beta>0$ and $|\alpha|< \beta/2,$ then $\frac{\beta x \cosh \alpha x}{\sinh \beta x}$
is positive definite.
\end{enumerate}

\end{lemma}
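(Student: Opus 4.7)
My plan is to apply Bochner's theorem in the form ``a continuous integrable function $\varphi:\mathbb{R}\to\mathbb{C}$ is positive definite if and only if its Fourier transform is a.e.\ nonnegative''. All three functions are even, continuous, and integrable under the stated hypotheses (the denominator dominates the numerator exponentially), so the task reduces to computing each Fourier transform and checking its sign. Because positive definiteness is preserved under real rescaling $x\mapsto cx$ and because $\cosh$ is even, I will normalise $\beta=1$ throughout; in (ii) the same-sign hypothesis also lets me assume $0\le\alpha<1$.

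For (i) and (ii), I will invoke the classical Fourier integrals
\begin{equation*}
\int_{-\infty}^{\infty}\frac{\cosh(ax)}{\cosh x}\,e^{-i\xi x}\,dx=\frac{\pi\cos(\pi a/2)\cosh(\pi\xi/2)}{\cos^{2}(\pi a/2)+\sinh^{2}(\pi\xi/2)},\qquad |a|<1,
\end{equation*}
\begin{equation*}
\int_{-\infty}^{\infty}\frac{\sinh(ax)}{\sinh x}\,e^{-i\xi x}\,dx=\frac{\pi\sin(\pi a)}{\cos(\pi a)+\cosh(\pi\xi)},\qquad |a|<1,
\end{equation*}
both derivable by residues on the rectangular contour with vertices $\pm R,\pm R+i\pi$ (the relevant poles lie at $i\pi/2$ and $i\pi$ respectively). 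Nonnegativity is visible at sight: in the first, $\cos(\pi a/2)>0$ and the denominator is positive; in the second, $\sin(\pi a)\ge 0$ for $0\le a<1$ while $\cos(\pi a)+\cosh(\pi\xi)\ge\cosh(\pi\xi)-1\ge 0$.

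For (iii), I exploit the identity $x\cosh(ax)=\partial_{a}\sinh(ax)$. After reducing to $\beta=1$ so that $|a|<1/2$, the Fourier transform of $x\cosh(ax)/\sinh x$ is obtained by differentiating the formula from (ii) under the integral sign (justified by a routine dominated-convergence argument). A direct computation yields
\begin{equation*}
\partial_{a}\!\left[\frac{\pi\sin(\pi a)}{\cos(\pi a)+\cosh(\pi\xi)}\right]=\frac{\pi^{2}\bigl(1+\cos(\pi a)\cosh(\pi\xi)\bigr)}{\bigl(\cos(\pi a)+\cosh(\pi\xi)\bigr)^{2}}.
\end{equation*}
Since $\cosh(\pi\xi)\ge 1$, the factor $1+\cos(\pi a)\cosh(\pi\xi)$ is nonnegative for every $\xi$ precisely when $\cos(\pi a)\ge 0$, i.e.\ when $|a|\le 1/2$; this matches the hypothesis $|\alpha|<\beta/2$ exactly, and the threshold $1/2$ emerges naturally from the formula rather than having to be engineered in.

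The genuinely analytic step---the one I expect to be the main obstacle---is the evaluation of the two Fourier integrals displayed above; everything else in the argument is algebraic rearrangement, scaling, and a single parameter differentiation. Both integrals are standard (found in Gradshteyn--Ryshik or derivable by the rectangular residue contour), so once they are cited or computed, (i) and (ii) follow immediately and (iii) is an easy corollary of (ii).
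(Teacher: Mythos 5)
Your proof is correct, but it takes a genuinely different route from the paper. The paper argues purely through closure properties of positive definite functions: it expands $\sinh x/x$ and $\cosh x$ as infinite products, writes each factor ratio as $\frac{1+a^2x^2}{1+b^2x^2}=\frac{a^2}{b^2}+\frac{1-a^2/b^2}{1+b^2x^2}$, which is positive definite because $1/(1+b^2x^2)$ is, and gets (i) and (ii) as limits of products of positive definite functions; for (iii) it uses the doubling identity $\sinh\beta x=2\sinh(\tfrac{\beta}{2}x)\cosh(\tfrac{\beta}{2}x)$ to factor $\frac{\beta x\cosh\alpha x}{\sinh\beta x}=\frac{(\beta/2)x}{\sinh(\frac{\beta}{2}x)}\cdot\frac{\cosh\alpha x}{\cosh(\frac{\beta}{2}x)}$ and quotes the positive definiteness of $x/\sinh x$, which is precisely where the threshold $|\alpha|<\beta/2$ enters. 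You instead verify the hypotheses of Bochner's theorem and compute the Fourier transforms explicitly; your two displayed integrals are correct (they match the tabulated forms after half-angle identities), the parameter differentiation for (iii) is justified by domination on $|a|\le a_0<1$, and the algebra giving $\frac{\pi^2(1+\cos(\pi a)\cosh(\pi\xi))}{(\cos(\pi a)+\cosh(\pi\xi))^2}$ is right. Your route is more computational but also yields more: the explicit transform shows that for (iii) the restriction $|\alpha|\le\beta/2$ is necessary as well as sufficient (the transform changes sign as soon as $\cos(\pi a)<0$), whereas the paper's factorization gives only sufficiency. Conversely, the paper's argument is softer: no contour integration, no inversion theorem, only cited facts from Bhatia's Chapter 5; the small technical debts in your sketch --- justifying the ``if'' direction of your Bochner statement (one needs $\hat\varphi\in L^1$ or an approximate-identity argument before writing $\sum_{j,k}c_j\bar c_k\varphi(x_j-x_k)=\frac{1}{2\pi}\int\hat\varphi(\xi)\bigl|\sum_j c_je^{ix_j\xi}\bigr|^2d\xi$), and the fact that for the $\sinh$ integral the pole at $i\pi$ lies on your rectangular contour and requires an indentation unless you simply cite the table --- are exactly the overhead the product argument avoids.
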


\begin{proof}
We follow a similar argument as in Chapter 5 of \cite{Bh07}. From the product representations in p. 147-148 of \cite{Bh07},
\begin{eqnarray}
\frac{sinh x}{x}=\prod_{k=1}^{\infty}\left(1+\frac{x^2}{k^2\pi^2}\right), \qquad
\cosh x=\prod_{k=0}^{\infty}\left(1+\frac{4x^2}{(2k+1)^2\pi^2}\right),
\end{eqnarray}
we have
\begin{eqnarray}
\frac{\sinh \alpha x}{\sinh \beta x}=\frac{\alpha}{\beta} \prod_{k=1}^{\infty}\frac{1+\alpha^2x^2/k^2\pi^2}{1+\beta^2x^2/k^2\pi^2}, \qquad
\frac{\cosh \alpha x}{\cosh \beta x}=\prod_{k=0}^{\infty}\frac{1+4\alpha^2x^2/(2k+1)^2\pi^2}{1+4\beta^2x^2/(2k+1)^2\pi^2}.
\end{eqnarray}
Each factor in the product is of the form
\[
\frac{1+a^2x^2}{1+b^2x^2}=\frac{a^2}{b^2}+\frac{1-a^2/b^2}{1+b^2x^2}, \quad b^2>a^2.
\]
Since $1/(1+b^2x^2)$ is positive definite \cite[5.2.7]{Bh07}, it follows that the functions $\cosh \alpha x/\cosh \beta x$ is positive definite for $|\beta|>|\alpha|>0,$ and $\sinh \alpha x/\sinh \beta x$ is positive definite for $|\beta|>|\alpha|>0$ with $\alpha, \beta$ the same sign.

Since
\[
\frac{\beta x \cosh \alpha x}{\sinh \beta x}=\frac{\frac{\beta}{2} x }{\sinh \frac{\beta}{2}x}\cdot\frac{\cosh \alpha x}{\cosh \frac{\beta}{2}x}
\]
and $x/\sinh x$ is positive definite \cite[5.2.9]{Bh07}, it follows from the above argument that when $\frac{\beta}{2}>|\alpha|$ the function $\frac{\beta x \cosh \alpha x}{\sinh \beta x}$ is positive definite.
\end{proof}

Now, we define
\[
L_s(a,b)=\frac{a^{1-s}b^s-a^sb^{1-s}}{(1-2s)(\log a-\log b)}=\frac{1}{1-2s}\int_s^{1-s}a^vb^{1-v}dv
\]
for $a,b>0,$ and $0\le s<\frac{1}{2}.$ When $s=0,$ it is the logarithmic mean. So we can call it the generalized logarithmic mean. And we also have $\lim_{s\rightarrow {\frac{1}{2}}}L_s(a,b)=a^{\frac{1}{2}}b^{\frac{1}{2}}.$

\begin{theorem} For Heinz mean and the generalized logarithmic mean, we have
\begin{enumerate}
\item[(i)] If $0\le s<1/2,$ and $|1-2t|<\frac{1-2s}{2},$ then $H_t(a,b)<< L_s(a,b).$

\item[(ii)] If $|1-2t|<|1-2s|,$ then $H_t(a,b)<< H_s(a,b).$

\item[(iii)] If $0\le s<t< 1/2,$ or $1\ge s>t> 1/2,$ then $L_t(a,b)<< L_s(a,b).$
\end{enumerate}
\end{theorem}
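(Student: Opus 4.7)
The plan is to translate each strong-domination claim into the positive definiteness of a single-variable function and then invoke Lemma~2.1.

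\textbf{Step 1 (exponential substitution).} The first thing I would do is set $\lambda_i = e^{2x_i}$ for the real numbers $x_1,\ldots,x_n$. Using $(1-t)\cdot 2x + t\cdot 2y = (x+y)+(1-2t)(x-y)$ and the analogous computation for the other exponent, I get the clean formulas
\begin{align*}
H_t(e^{2x}, e^{2y}) &= e^{x+y}\cosh\bigl((1-2t)(x-y)\bigr),\\
L_s(e^{2x}, e^{2y}) &= e^{x+y}\,\frac{\sinh\bigl((1-2s)(x-y)\bigr)}{(1-2s)(x-y)}.
\end{align*}
Both share the common prefactor $e^{x+y}$, which cancels in any ratio. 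Thus for $M,N$ drawn from $\{H_t,L_t,H_s,L_s\}$, the matrix $\bigl[M(\lambda_i,\lambda_j)/N(\lambda_i,\lambda_j)\bigr]_{i,j}$ equals $\bigl[f(x_i-x_j)\bigr]_{i,j}$ for an explicit even function $f$, and the claim $M \ll N$ is equivalent to positive definiteness of $f$ on $\mathbb R$.

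\textbf{Step 2 (matching the three ratios to Lemma~2.1).} Writing $\alpha = 1-2t$ and $\beta = 1-2s$, the three ratios to be considered are
\begin{align*}
\frac{H_t}{L_s} &= \frac{\beta u \cosh(\alpha u)}{\sinh(\beta u)}, \qquad
\frac{H_t}{H_s} = \frac{\cosh(\alpha u)}{\cosh(\beta u)}, \qquad
\frac{L_t}{L_s} = \frac{\beta}{\alpha}\cdot\frac{\sinh(\alpha u)}{\sinh(\beta u)},
\end{align*}
where $u = x - y$. For (i), the hypothesis $|1-2t|<(1-2s)/2$ with $s<1/2$ translates into $|\alpha|<\beta/2$ and $\beta>0$, so Lemma~2.1(iii) applies verbatim. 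For (ii), $|1-2t|<|1-2s|$ translates into $|\alpha|<|\beta|$, matching Lemma~2.1(i). For (iii), the two allowed parameter regions $0\le s<t<1/2$ and $1/2<t<s\le 1$ correspond respectively to $0<\alpha<\beta$ and $\beta<\alpha<0$, so in both cases $|\beta|>|\alpha|>0$ with $\alpha,\beta$ of the same sign; Lemma~2.1(ii) gives positive definiteness of $\sinh(\alpha u)/\sinh(\beta u)$, and the scalar factor $\beta/\alpha>0$ preserves positive definiteness.

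\textbf{Main obstacle.} There is no genuine difficulty once Step~1 is in place; the argument is essentially a parameter match. The only wrinkle is the edge case $t = 1/2$ (whence $\alpha = 0$) in part (ii), which the statement $|\beta|>|\alpha|>0$ of Lemma~2.1(i) does not literally cover. I would dispose of this by going back to the product representation used inside the proof of Lemma~2.1: with $\alpha=0$ the ratio reduces to $1/\cosh(\beta u) = \prod_{k\ge 0} 1/\bigl(1+4\beta^2 u^2/(2k+1)^2\pi^2\bigr)$, each factor is positive definite by \cite[5.2.7]{Bh07}, and the (convergent) product of positive definite functions is positive definite. Part (iii) never reaches $\alpha = 0$ because the hypothesis excludes $t=1/2$, so the scalar $\beta/\alpha$ is well-defined and positive there.
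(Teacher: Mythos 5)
Your proof is correct and follows essentially the same route as the paper: substitute $\lambda=e^{x}$ (you use $e^{2x}$, which just absorbs the factor $\tfrac12$), reduce each strong-domination claim to positive definiteness of $\beta u\cosh(\alpha u)/\sinh(\beta u)$, $\cosh(\alpha u)/\cosh(\beta u)$, or $\sinh(\alpha u)/\sinh(\beta u)$ with $\alpha=1-2t$, $\beta=1-2s$, and invoke Lemma~2.1. Your explicit treatment of the $t=\tfrac12$ (i.e.\ $\alpha=0$) edge case in part~(ii) via the product representation is a small refinement the paper leaves implicit, but the argument is otherwise identical.
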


\begin{proof}
By definition, $H_t(a,b)<< L_s(a,b)$ if
\[
[y_{i,j}]=\left[\frac{H_t(\lambda_i,\lambda_j)}{L_s(\lambda_i,\lambda_j)}\right]_{i,j=1,\ldots,n}
\]
is positive semidefinite. Set $\lambda_i=e^{x_i}$ and $\lambda_j=e^{x_j},$ with $x_i, x_j\in \mathbb{R}.$ Then
\[
y_{i,j}=(1-2s)\frac{(\frac{x_i-x_j}{2})(e^{(1-2t)\frac{x_i-x_j}{2}}+e^{(1-2t)\frac{x_j-x_i}{2}})}
{e^{(1-2s)\frac{x_i-x_j}{2}}-e^{(1-2s)\frac{x_j-x_i}{2}}}
\]
Thus the matrix $[y_{i,j}]$ is congruent to one with entries
\[
\frac{\beta(\frac{x_i-x_j}{2})\cosh (\alpha(\frac{x_i-x_j}{2}))}{\sinh(\beta (\frac{x_i-x_j}{2}))},
\]
where $\alpha=1-2t, \beta=1-2s.$ Hence $[y_{i,j}]$ is positive semidefinite if and only if the function $\frac{\beta x \cosh \alpha x}{\sinh \beta x}$ is positive definite, which by lemma 2.1 is correct.

Similarly, we have $H_t(a,b)<< H_s(a,b)$ if $\frac{\cosh \alpha x}{\cosh \beta x}$ is positive definite, and $L_t(a,b)<< L_s(a,b)$ if $\frac{\sinh \alpha x}{\sinh \beta x}$ is positive definite.
\end{proof}

\begin{theorem}
Let $A, B$ be any positive matrices. Then for any matrix $X$ and for $0\le s<1/2$ and $|1-2t|<(1-2s)/2,$ we have
\begin{eqnarray}
\normmm{A^{1-t}XB^t+A^tXB^{1-t}}\le \frac{2}{1-2s}\normmm{\int_s^{1-s}A^vXB^{1-v}dv}.
\end{eqnarray}
\end{theorem}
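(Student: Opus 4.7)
The plan is to deduce the inequality directly from Theorem 2.2(i) via the standard bridge that turns strong dominance of symmetric homogeneous means into unitarily invariant norm inequalities through a Schur-multiplier argument.

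First, I reduce to the case $A=B$ by the block-matrix trick: set
\[
\widetilde A=A\oplus B,\qquad \widetilde X=\begin{pmatrix}0 & X\\ 0 & 0\end{pmatrix}.
\]
For any exponents $u,v\ge 0$ one has $\widetilde A^{u}\widetilde X\widetilde A^{v}=\bigl(\begin{smallmatrix}0 & A^{u}XB^{v}\\ 0 & 0\end{smallmatrix}\bigr)$, and this block matrix has the same unitarily invariant norm as $A^{u}XB^{v}$. Both sides of the target inequality respect this substitution (the integral passes into the $(1,2)$-block), so it suffices to prove the inequality with $B$ replaced by $A$.

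Next, with the spectral decomposition $A=\sum_{i}\lambda_{i}P_{i}$, the two sides expand as
\[
A^{1-t}XA^{t}+A^{t}XA^{1-t}=2\sum_{i,j}H_{t}(\lambda_{i},\lambda_{j})\,P_{i}XP_{j},
\]
\[
\int_{s}^{1-s}A^{v}XA^{1-v}\,dv=(1-2s)\sum_{i,j}L_{s}(\lambda_{i},\lambda_{j})\,P_{i}XP_{j}.
\]
Cancelling the scalar factor $2/(1-2s)$, the claim reduces to the Schur-type comparison
\[
\normmm{\sum_{i,j}H_{t}(\lambda_{i},\lambda_{j})P_{i}XP_{j}}\le\normmm{\sum_{i,j}L_{s}(\lambda_{i},\lambda_{j})P_{i}XP_{j}}.
\]

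This last step uses the standard fact that Hadamard multiplication by a positive semidefinite matrix with unit diagonal is a contraction in every unitarily invariant norm. Here the multiplier is $K_{ij}=H_{t}(\lambda_{i},\lambda_{j})/L_{s}(\lambda_{i},\lambda_{j})$, which is positive semidefinite by Theorem 2.2(i) and satisfies $K_{ii}=1$ because $H_{t}(\lambda,\lambda)=L_{s}(\lambda,\lambda)=\lambda$. The only subtlety I anticipate is verifying that the Schur-product contraction applies in the basis of the spectral projections $P_{i}$ rather than in the standard matrix-unit basis; this is the routine bookkeeping step in the Bhatia--Hiai--Kosaki framework (cf.\ Drissi's derivation of (1.3) in \cite{Dri06}), and presents no analytic obstacle beyond Theorem 2.2(i).
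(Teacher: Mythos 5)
Your proposal is correct and follows essentially the same route as the paper: reduce to $A=B$ via the block matrices $A\oplus B$ and $\bigl(\begin{smallmatrix}0&X\\0&0\end{smallmatrix}\bigr)$, express both sides through the spectral decomposition of $A$, and apply the Schur-multiplier contraction with the positive semidefinite matrix coming from Theorem 2.2(i). The only cosmetic difference is that you normalize the multiplier $H_t/L_s$ to have unit diagonal, whereas the paper keeps the constant inside the multiplier and invokes the bound $\normmm{Y\circ X}\le \max_i y_{ii}\,\normmm{X}$; these are the same Horn--Johnson fact, and your flagged ``subtlety'' disappears once one diagonalizes $A$ by unitary invariance, exactly as the paper does.
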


\begin{proof}
Firstly, we assume $A=B.$ Since $\normmm{\cdot}$ is unitarily invariant, we may suppose $A$ is diagonal with entries $\lambda_1,\ldots, \lambda_n.$ Then we have
\[
A^{1-t}XA^t+A^tXA^{1-t}=Y\circ (\int_s^{1-s}A^vXA^{1-v}dv),
\]
where $Y$ is the matrix with entries
\[
y_{i,j}=\frac{\lambda_i^t\lambda_j^{1-t}+\lambda_i^{1-t}\lambda_j^{t}}{\frac{\lambda_i^{1-s}\lambda_j^s-\lambda_i^s\lambda_j^{1-s}}{\log \lambda_i -\log \lambda_j}}
=\frac{2 H_t(\lambda_i,\lambda_j)}{(1-2s)L_s(\lambda_i,\lambda_j)}
\]
A well-known result related to the Schur multiplier norm \cite[Theorem 5.5.18, 5.5.19]{HJ91} says that if $Y$ is any positive semidefinite matrix, then for all matrix $X,$
\begin{eqnarray}
\normmm{Y\circ X}\le \max_{i}{y_{ii}}\normmm{X}\end{eqnarray}
for every unitarily invariant norm.
By Theorem 2.2, $Y$ is positive semidefinite. Applying $(2.4),$ we have
\begin{eqnarray}
\normmm{A^{1-t}XA^t+A^tXA^{1-t}}\le \frac{2}{1-2s}\normmm{\int_s^{1-s}A^vXA^{1-v}dv}.
\end{eqnarray}
Now replacing $A$ and $X$ in the inequality $(2.5)$ by the $2$ by $2$ matrices
$\left(\begin{array}{cc}
    A  & 0\\
    0 & B\\
  \end{array}\right)$
and
$\left(\begin{array}{cc}
    0 & X\\
    0 & 0\\
  \end{array}\right).$
  This gives the desired inequality $(2.3).$
\end{proof}

When $s=0,$ we get Drissi's result $(1.3).$ Moreover, when $s=0, t=1/2,$ we get the first inequality of $(1.2).$

\begin{theorem}
Let $A, B$ be any positive matrices. Then for any matrix $X$ and for $0\le s<1/2,$  we have
\begin{eqnarray}
\normmm{\int_s^{1-s}A^vXB^{1-v}dv}\le \frac{1-2s}{2}\normmm{A^{1-s}XB^s+A^sXB^{1-s}}.
\end{eqnarray}
\end{theorem}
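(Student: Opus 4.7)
The plan is to adapt the Schur multiplier argument used in Theorem 2.3, but with the roles of the Heinz and generalized logarithmic means interchanged, so that $\int_s^{1-s}A^vXB^{1-v}dv$ is represented as a Schur product against $A^{1-s}XB^s+A^sXB^{1-s}$ rather than the reverse. As in the proof of Theorem 2.3, the $2\times 2$ block substitution $A\mapsto \mathrm{diag}(A,B)$, $X\mapsto \left(\begin{array}{cc}0&X\\0&0\end{array}\right)$ will reduce the problem to the case $A=B$.

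For $A=B$ diagonal with entries $\lambda_1,\dots,\lambda_n$, I would verify directly that
\[
\int_s^{1-s}A^vXA^{1-v}dv=Z\circ\bigl(A^{1-s}XA^s+A^sXA^{1-s}\bigr),
\]
where $Z=[z_{ij}]$ has entries $z_{ij}=\frac{(1-2s)\,L_s(\lambda_i,\lambda_j)}{2\,H_s(\lambda_i,\lambda_j)}$. Because $L_s(\lambda,\lambda)=H_s(\lambda,\lambda)=\lambda$, one has $z_{ii}=(1-2s)/2$, so once $Z$ is shown to be positive semidefinite the Schur multiplier inequality (2.4) will deliver exactly the factor $(1-2s)/2$ on the right-hand side of (2.6).

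The main obstacle is proving that $Z$ is positive semidefinite. Setting $\lambda_i=e^{x_i}$ and $\beta=1-2s$, the same type of manipulation as in the proof of Theorem 2.2 shows that $Z$ is congruent, up to a positive constant, to the matrix whose entries depend on $x_i-x_j$ only through the function $\tanh(\beta x)/(\beta x)$. So it suffices to show this function is positive definite. Since Lemma 2.1 does not cover this case directly, I would redo the product expansion: by (2.1),
\[
\frac{\tanh(\beta x)}{\beta x}=\frac{\sinh(\beta x)/(\beta x)}{\cosh(\beta x)}=\prod_{k=1}^{\infty}\frac{1+\beta^2x^2/(k\pi)^2}{1+4\beta^2x^2/((2k-1)\pi)^2}.
\]
Each factor has the form $(1+a^2x^2)/(1+b^2x^2)$ with $b^2>a^2$, since $4k^2>(2k-1)^2$ for every $k\ge 1$. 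The argument in the proof of Lemma 2.1 then shows each factor is positive definite, whence so is the product, completing the verification and hence the proof of (2.6).
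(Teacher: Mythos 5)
Your proposal is correct and takes essentially the same route as the paper: the same Schur-multiplier decomposition $\int_s^{1-s}A^vXA^{1-v}dv=Z\circ(A^{1-s}XA^s+A^sXA^{1-s})$ with $z_{ij}=\frac{(1-2s)L_s(\lambda_i,\lambda_j)}{2H_s(\lambda_i,\lambda_j)}$, the bound (2.4), and the $2\times 2$ block reduction to pass from $A=B$ to general $A,B$. The only difference is that where the paper simply cites Bhatia [5.2.11] for the positive definiteness of $\tanh(\beta x)/(\beta x)$, you rederive it from the product representations (2.1), using $4k^2>(2k-1)^2$, which is a valid, self-contained substitute.
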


\begin{proof}
Suppose $A$ is diagonal with entries $\lambda_1,\ldots, \lambda_n.$ Then we have
\[
\int_s^{1-s}A^vXA^{1-v}dv=Y\circ (A^{1-s}XA^s+A^sXA^{1-s}),
\]
where $Y$ is the matrix with entries
\[
y_{i,j}=\frac{\frac{\lambda_i^{1-s}\lambda_j^s-\lambda_i^s\lambda_j^{1-s}}{\log \lambda_i -\log \lambda_j}} {\lambda_i^s\lambda_j^{1-s}+\lambda_i^{1-s}\lambda_j^{s}}
=\frac{(1-2s)L_s(\lambda_i,\lambda_j)}{2H_s(\lambda_i,\lambda_j)}.
\]
By a similar argument as in the proof of Theorem 2.2, we know that $Y$ is positive definite if and only if
\[
\frac{\beta}{2}\frac{\sinh \beta x}{\beta x \cosh \beta x}=\frac{\beta}{2}\frac{\tanh \beta x}{\beta x}
\]
is positive definite for $\beta=1-2s>0,$ because $\tanh x/x$ is positive definite (see Bhatia \cite[5.2.11]{Bh07}).
Thus Applying $(2.4)$ we have
\begin{eqnarray}
\normmm{\int_s^{1-s}A^vXA^{1-v}dv}\le \frac{1-2s}{2}\normmm{A^{1-s}XA^s+A^sXA^{1-s}}.
\end{eqnarray}
Hence the desired result follows.
\end{proof}

When $s=0,$ we get the second inequality of $(1.2).$

\begin{theorem}
Let $A, B$ be any positive matrices. Then for any matrix $X$ and for $|1-2t|<|1-2s|$ with $1-2t, 1-2s$ the same sign, we have
\begin{eqnarray}
\normmm{A^{1-t}XB^t-A^tXB^{1-t}}\le \left|\frac{1-2t}{1-2s}\right|\normmm{A^{1-s}XB^s-A^sXB^{1-s}}.
\end{eqnarray}
\end{theorem}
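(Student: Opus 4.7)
The plan is to mirror the Schur multiplier strategy of Theorems 2.3 and 2.4. I would first handle the case $A=B$, then pass to general positive $A,B$ by the standard $2\times 2$ block trick (replacing $A$ by $\mathrm{diag}(A,B)$ and $X$ by the matrix with $X$ in the $(1,2)$ block and zeros elsewhere). Assume therefore that $A$ is diagonal with positive entries $\lambda_1,\ldots,\lambda_n$. A direct entrywise computation gives the Hadamard-product identity
\[
A^{1-t}XA^t-A^tXA^{1-t}=Y\circ\bigl(A^{1-s}XA^s-A^sXA^{1-s}\bigr),
\]
where $Y=[y_{i,j}]$ has entries
\[
y_{i,j}=\frac{\lambda_i^{1-t}\lambda_j^{t}-\lambda_i^{t}\lambda_j^{1-t}}{\lambda_i^{1-s}\lambda_j^{s}-\lambda_i^{s}\lambda_j^{1-s}}.
\]

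Setting $\lambda_i=e^{x_i}$ and $\alpha=1-2t$, $\beta=1-2s$, the elementary identity
\[
a^{1-r}b^{r}-a^{r}b^{1-r}=(ab)^{1/2}\cdot 2\sinh\!\bigl((1-2r)(\log a-\log b)/2\bigr)
\]
lets the common prefactor $(\lambda_i\lambda_j)^{1/2}$ cancel, reducing $y_{i,j}$ to
\[
y_{i,j}=\frac{\sinh\bigl(\alpha(x_i-x_j)/2\bigr)}{\sinh\bigl(\beta(x_i-x_j)/2\bigr)}.
\]
Under the hypotheses $|1-2t|<|1-2s|$ with $\alpha,\beta$ of the same sign, Lemma 2.1(ii) (applied with $\alpha/2$ and $\beta/2$ in place of $\alpha,\beta$) gives the positive definiteness of the function $\sinh(\alpha x/2)/\sinh(\beta x/2)$, so $Y$ is positive semidefinite.

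The diagonal entries of $Y$ all equal $\lim_{u\to 0}\sinh(\alpha u)/\sinh(\beta u)=\alpha/\beta$, which by the same-sign hypothesis equals $|(1-2t)/(1-2s)|$. Inserting $Y$ into the Schur multiplier norm inequality (2.4) produces
\[
\normmm{A^{1-t}XA^t-A^tXA^{1-t}}\le\left|\frac{1-2t}{1-2s}\right|\normmm{A^{1-s}XA^s-A^sXA^{1-s}},
\]
and the block dilation then yields (2.8). I do not expect a real obstacle; the only point that needs care is the same-sign condition, which is imposed precisely so that $\alpha/\beta>0$ matches Lemma 2.1(ii) and so that the maximum diagonal entry of $Y$ coincides with the absolute-value constant on the right-hand side of (2.8).
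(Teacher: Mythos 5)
Your proposal is correct and follows essentially the same route as the paper: diagonalize $A$, write the difference as a Hadamard product with the matrix $Y$, reduce its entries to $\sinh(\alpha(x_i-x_j)/2)/\sinh(\beta(x_i-x_j)/2)$ via Lemma 2.1(ii), and apply the Schur multiplier bound (2.4) plus the $2\times 2$ block dilation. Your explicit remark that the diagonal entries of $Y$ equal $\alpha/\beta=|(1-2t)/(1-2s)|$ is a welcome clarification the paper leaves implicit.
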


\begin{proof}
Suppose $A$ is diagonal with entries $\lambda_1,\ldots, \lambda_n.$ Then we have
\[
A^{1-t}XA^t-A^tXA^{1-t}=Y\circ (A^{1-s}XA^s-A^sXA^{1-s}),
\]
where $Y$ is the matrix with entries
\[
y_{i,j}=\frac{\lambda_i^{1-t}\lambda_j^{t}-\lambda_i^t\lambda_j^{1-t}} {\lambda_i^{1-s}\lambda_j^{s}-\lambda_i^s\lambda_j^{1-s}}
\]
Put $\lambda_i=e^{x_i}$ and $\lambda_j=e^{x_j},$ with $x_i, x_j\in \mathbb{R}.$ Then
$Y$ is congruent to the matrix with entries
\[
\frac{\sinh (\alpha\frac{x_i-x_j}{2})}{\sinh (\beta\frac{x_i-x_j}{2})}
\]
where $\alpha=1-2t, \beta=1-2s.$ Since $(\sinh \alpha x)/(\sinh \beta x)$ is positive for $|\beta|>|\alpha|>0$ with $\alpha, \beta$ the same sign,  it follows that
$Y$ is positive definite. Applying the inequality $(2.4),$ we have
\begin{eqnarray}
\normmm{A^{1-t}XA^t-A^tXA^{1-t}}\le \left|\frac{1-2t}{1-2s}\right|\normmm{A^{1-s}XA^s-A^sXA^{1-s}}.
\end{eqnarray}
Hence the desired result follows.
\end{proof}

Set $s=0$ and $s=1,$ and combine the conclusions, we have the following inequality proved by Bhatia-Davis \cite{BD95},

\begin{corollary}
Let $A, B$ be any positive matrices. Then for any matrix $X$ and for $0\le t\le 1$  we have
\begin{eqnarray}
\normmm{A^{1-t}XB^t-A^tXB^{1-t}}\le |1-2t|\normmm{AX-XB}.
\end{eqnarray}
\end{corollary}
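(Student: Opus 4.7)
The plan is to obtain the corollary by applying Theorem 2.5 at the two boundary choices $s=0$ and $s=1$, which together with the trivial endpoint values of $t$ cover the entire interval $0\le t\le 1$.

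First I would take $s=0$ in Theorem 2.5. Here $1-2s=1>0$, so the hypothesis ``$1-2t$ and $1-2s$ the same sign with $|1-2t|<|1-2s|$'' reduces to $0<t<1/2$. In that range, inequality (2.8) specializes to
\[
\normmm{A^{1-t}XB^t-A^tXB^{1-t}}\le |1-2t|\,\normmm{AX-XB},
\]
which is exactly the desired bound. Next I would take $s=1$, where $1-2s=-1<0$; the hypothesis then forces $1/2<t<1$, and (2.8) yields
\[
\normmm{A^{1-t}XB^t-A^tXB^{1-t}}\le |1-2t|\,\normmm{A^{0}XB^{1}-A^{1}XB^{0}}=|1-2t|\,\normmm{XB-AX}.
\]
Since $\normmm{XB-AX}=\normmm{AX-XB}$ for any unitarily invariant norm, this is again the desired bound, now valid on $1/2<t<1$.

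To close the proof I would handle the three boundary values $t=0,\ \tfrac12,\ 1$ by direct inspection. At $t=\tfrac12$ the left-hand side is $\normmm{A^{1/2}XB^{1/2}-A^{1/2}XB^{1/2}}=0$ and the right-hand side is $|1-2t|\,\normmm{AX-XB}=0$, so equality holds trivially. At $t=0$ the inequality reduces to $\normmm{AX-XB}\le\normmm{AX-XB}$, and at $t=1$ it reduces to $\normmm{XB-AX}\le\normmm{AX-XB}$; both are obvious.

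Since there is no genuine obstruction here, the only thing to be careful about is checking that Theorem 2.5 is indeed applicable at the extreme values $s=0,1$: the matrices $A$ and $B$ raised to those exponents give $A,B,$ and the identity, which poses no issue, and the ``same sign'' condition is what naturally splits the range $[0,1]$ at $t=1/2$ into the two sub-intervals used above.
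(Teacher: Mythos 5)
Your proposal is correct and follows exactly the paper's route: the paper also obtains the corollary by applying Theorem 2.5 at $s=0$ and $s=1$ and combining the two ranges of $t$. Your explicit treatment of the boundary values $t=0,\tfrac12,1$ is a detail the paper leaves implicit, but it is a welcome completion rather than a deviation.
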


\subsection*{Acknowledgments}
The author acknowledges support from National Natural Science Foundation of China, Grant No: 12001477, and the Natural Science Foundation of Jiangsu Province for Youth, Grant No: BK20190874.


\begin{thebibliography}{10}

\bibitem{Bh07} R. Bhatia, Positive Definite Matrcies. Princeton University Press, Princeton (2007).

\bibitem{BD93} R. Bhatia and C. Davis, More matrix forms of the arithmetic-geometric mean inequality, SIAM J. Matrix Anal. Appl., 14 (1993), 132-136.

\bibitem{BD95} R. Bhatia and C. Davis, A Cauchy-Schwarz inequality for operators with applications, Linear Algebra Appl., 223/224 (1995), 119-129.

\bibitem{Dri06} D. Drissi, Sharp inequalities for some operator means, SIAM J. Matrix Anal. Appl., 28(3) (2006), 822-828.

\bibitem{HK99} F. Hiai and H. Kosaki, Means for matrices and comparison of their norms, Indiana Univ. Math. J., 48 (1999), 899-936.

\bibitem{HJ91} R. A. Horn and C. R. Johnson, Topics in Matrix Analysis, Cambridge University Press, Cambridge, UK, 1991.

\bibitem{KS14} Y. Kapil and M. Singh, Contractive maps on operator ideals and norm inequalities. Linear Algebra Appl., 459 (2014), 475-492.

\bibitem{Ko98} H. Kosaki, Arithmetic-geometric mean and related inequalities for operators, J. Funct. Anal., 156 (1998), 429-451.

\bibitem{Zhan99} X. Zhan, Inequalities for unitarily invariant norms, SIAM J. Matrix Anal. Appl., 20 (1999), 466-470.

\end{thebibliography}
\end{document}